\DeclareMathOperator{\hol}{Hol}
\def\mrd{\mathrm{d}}
\def\eqref#1{$(\ref{#1})$}
\newenvironment{proof}{\noindent {\em {Proof}}.}{$\square$
\medskip}
\newtheorem{theorem}{Theorem}[section]
\newtheorem{corollary}[theorem]{Corollary}
\newtheorem{lemma}[theorem]{Lemma}
\newtheorem{remark}[theorem]{Remark}
\newtheorem{proposition}[theorem]{Proposition}
\newtheorem{example}[theorem]{Example}
\newtheorem{question}[theorem]{Question}
\newtheorem{problem}[theorem]{Problem}
\newcommand{\disk}{\mathbb{D}}
\newcommand {\intd} {\int _ \mathbb{D}}
\newcommand{\bt}[1]{\begin{theorem}\label{#1}}
\newcommand{\bc}[1]{\begin{corollary}\label{#1}}
\newcommand{\bl}[1]{\begin{lemma}\label{#1}}
\newcommand{\bp}[1]{\begin{proposition}\label{#1}}
\newcommand{\be}[1]{\begin{example}\rm\label{#1}}
\newcommand{\bq}[1]{\begin{question}\rm\label{#1}}
\newcommand{\bprob}[1]{\begin{problem}\rm\label{#1}}
\newcommand{\beq}[1]{\begin{eqnarray}\label{#1}}
\newcommand{\br}[1]{\begin{remark}\rm\label{#1}}
\newcommand{\el}{\end{lemma}}
\newcommand{\ep}{\end{proposition}}
\newcommand{\ee}{\end{example}}
\newcommand{\eq}{\end{question}}
\newcommand{\eprob}{\end{problem}}
\newcommand{\eeq}{\end{eqnarray}}
\newcommand{\ed}{\end{definition}}
\newcommand{\et}{\end{theorem}}
\newcommand{\ec}{\end{corollary}}
\newcommand{\er}{\end{remark}}
\title{Hilbert-Schmidt composition-differentiation operators on the unit ball}
\author{Ali Abkar
\vspace{0.5in}\\
Department of Pure Mathematics, Faculty of Science,\\
Imam Khomeini International University, Qazvin 34149, Iran\\
\texttt{Email:~abkar@sci.ikiu.ac.ir}\\}
\date{}
\begin{document}
\maketitle \textbf{Abstract.}
We use the notion of radial derivative to introduce 
composition-differentiation operators on the Hardy and Bergman spaces of the unit ball and the polydisk. We seek for necessary and sufficient conditions on the inducing functions to ensure that the composition-differentiation operator is Hilbert-Schmidt.\\

\noindent\textbf{Keywords}: {Composition operator; composition-differentiation operator; radial derivative; Hilbert-Schmidt operator; Bergman space; Hardy space}\\

\noindent \textbf{MSC}: 47B33, 47B32, 32A25\\
\textbf{}{}

\section{Introduction}
Let $\mathbb{D}$ denote the open unit disk in the complex plane.
For $\alpha>-1$, we define the weighted Bergman space $A^2_\alpha(\disk)$
as the space of analytic functions $f$ in $\mathbb{D}$ for which
 $$\Vert f\Vert^2_{A^2_\alpha(\disk)}=\intd |f(z)|^2 dA_\alpha (z)<+\infty,$$ where $dA_\alpha (z)=(\alpha +1)(1-|z|^2)^\alpha dA(z)$, here
$dA(z)=\pi^{-1}dx\,dy$ is the
normalized area measure in the complex plane.
 It is well-known that $A^2_\alpha(\disk)$ equipped with the inner product
 $$\left <f,g\right >= \intd
f(z)\overline{g(z)}dA_\alpha (z),$$
is a functional Hilbert.
It follows from boundedness of the point evaluation functional together with the Riesz' representation theorem that
each $f\in A^2_\alpha(\disk)$ can be written as
$$f(w)=\langle f,K_w^\alpha\rangle =\intd f(z)\overline {K_w^\alpha(z)}
dA_\alpha (z),\quad w\in\disk,$$
where
$$K_w^\alpha(z)=\frac{1}{(1-z\overline w)^{\alpha+2}}$$
is the
reproducing kernel for $A^2_\alpha(\disk)$.
More information on the theory of Bergman spaces can be found in \cite{duren} and \cite{hkz}.\par
We mean by the polydisk the subset
$\mathbb{D}^n=\mathbb{D}\times \cdots \times \mathbb{D}$ of $\mathbb{C}^n$,
and by the unit ball the
set
$$\mathbb{B}_n=\{z=(z_1,...,z_n)\in\mathbb{C}^n: |z|^2=|z_1|^2+\cdots+|z_n|^2<1\}.$$
Let $\hol(\mathbb{B}_n)$ denote the space of all holomorphic functions on the unit ball.
The weighted Bergman space on the unit ball $\mathbb{B}_n$ is defined as
$$A^2_\alpha(\mathbb{B}_n)=\hol(\mathbb{B}_n)\cap L^2(\mathbb{B}_n, dv_\alpha)$$
where $dv_\alpha(z) =c_\alpha (1-|z|^2)^\alpha dv(z)$, and
$dv(z)$ is the normalized volume measure on $\mathbb{B}_n$ and $c_\alpha$ is a positive constant so that $v_\alpha (\mathbb{B}_n)=1$.
Let
$J =(j_1,\cdots, j_n)$ be a multi-index; this means that $J$ is an $n$-tuple of
nonnegative integers. In this case we write $J \ge 0$. We define $|J|=j_1+\cdots + j_n$.
It can be verified that the vectors
\begin{equation}\label{on-bergman-ball}
e_J(z)=\sqrt{\frac{\Gamma(n+|J|+\alpha+1)}{J!\, \Gamma(n+\alpha+1)}}\, z^J,\quad J=(j_1,...,j_n).
\end{equation}
form an orthonormal basis for $A_\alpha^2(\mathbb{B}_n)$. Moreover, the
reproducing kernel associated to the points $(z_1,...,z_n)$ and $w=(w_1,...,w_n)$ of the unit ball is given by (see \cite{rud} or \cite{zhu2}):
$$K^\alpha(z,w)=\frac{1}{(1-\langle z,w\rangle)^{n+1+\alpha}},$$
where $\langle z,w\rangle =\sum_{k=1}^n z_k\overline w_k$ is the usual inner product in $\mathbb{C}^n$.\par
Let $\mathbb{T}^n$ denote the distinguished boundary of $\mathbb{D}^n$, and let $\mathbb{S}_n$ denote the (topological) boundary of $\mathbb{B}_n$. Let $d\sigma$ denote the normalized Lebesgue measure on $\mathbb{T}^n$ or $\mathbb{S}_n$. The Hardy space $H^2(\disk^n)$ consists of holomorphic functions on the polydisk such that
$$\sup_{0<r<1}\int_{\mathbb{T}^n}|f(r\zeta)|^2 \mathrm{d}\sigma(\zeta)<\infty.$$
Similarly, the Hardy space $H^2(\mathbb{B}_n)$ consists of holomorphic functions in the unit ball such that
$$\sup_{0<r<1}\int_{\mathbb{S}_n}|f(r\zeta)|^2 \mathrm{d}\sigma(\zeta)<\infty.$$
It is known that for every function $f$ in the Hardy space (of $\disk^n$ or $\mathbb{B}_n$), the boundary function
$$f(\zeta)=\lim_{r\to 1^-}f(r\zeta)$$
is well-defined almost everywhere on ($\mathbb{T}^n$ or $\mathbb{S}_n$). Therefore, $H^2(\Omega)$ is a Hilbert space with the following inner product:
$$\langle f,g\rangle=\int_{\partial\Omega}f(\zeta)\overline{g(\zeta)}\mathrm{d}\sigma(\zeta),\qquad \Omega=\disk^n,\,\, \textrm{or}\,\,  \Omega=\mathbb{B}_n.$$
We should recall that for a multi-index $J$ we have (see \cite[Prop. 1.4.9]{rud}, or \cite[Lemma 1.11]{zhu2}):
$$\int_{\mathbb{S}_n}|z^J|^2\mrd \sigma(z)=\frac{(n-1)!\,J!}{(n-1+|J|)!},$$
from which it follows that the vectors
\begin{equation}\label{on-hardy-ball}
\sqrt{\frac{(n-1+|J|)!}{(n-1)!\,J!}}\, z^J,\quad J=(j_1,...,j_n),
\end{equation}
form an orthonormal basis for $H^2(\mathbb{B}_n)$.
Now, let $\varphi:\mathbb{D}^m\to \mathbb{D}^n$ be a holomorphic function;
$$\varphi(z_1,...,z_m)=\Big (\varphi_1(z_1,...,z_m),...,\varphi_n(z_1,...,z_m)\Big ).$$
In the operator theory of one complex variables, the composition operator is defined by $C_\varphi (f)=f\circ \varphi$ where $\varphi$ is an analytic self-map of the unit disk, and $f$ is a function in the Hardy or Bergman space. This notion can be easily generalized to several variables.
For a holomorphic function $\psi :\mathbb{D}^m\to \mathbb{C}$, the weighted composition operator $C_{\psi,\varphi}:A_\alpha^2(\disk^n)\to A_\beta^2(\disk^m)$ is defined by
$$C_{\psi,\varphi}(f)=\psi\cdot \left(f\circ \varphi\right).$$
This operator is a rather classical one, and was studied by several authors. It was proved by M. Stessin and Kehe Zhu that only in some instances the composition operator is bounded (see \cite{szhu1} and \cite{szhu2}).
\par
Now, the question arises as how to generalize the one-dimensional composition-differentiation operator $D_\varphi (f)=f^\prime\circ \varphi$ to several variable case. To do this, we
let $f\in \hol(\mathbb{B}_n)$ be a holomorphic function of several variables. For $t>0$, we consider the radial differential operator
$$R^tf(z)=\sum_{k=1}^\infty k^t f_k(z),$$
where $f(z)=\sum_{k=0}^\infty f_k(z)$ is the homogenous expansion of $f$.
We now introduce the weighted composition-differentiation operator
\begin{equation*}\label{one-variable}
{E}_{\psi,\varphi}(f)=\psi\cdot \left( (Rf) \circ \varphi \right),\quad f\in \hol(\mathbb{B}_n),
\end{equation*}
and the higher order weighted composition-differentiation operator
\begin{equation*}\label{higher-order}
{E}_{\psi,\varphi}^t(f)=\psi\cdot \left((R^tf) \circ \varphi\right),\quad t\ge 0,\,\,  f\in \hol(\mathbb{B}_n).
\end{equation*}
The main purpose of this paper is to study conditions on the symbol $\varphi$ and the function $\psi$ to ensure that the operator ${E}_{\psi,\varphi}^t$ is Hilbert-Schmidt. This will be done in section \S 3. We emphasize that ${E}_{\psi,\varphi}$ generalizes the one-variable composition-differentiation operator
$D_{\psi,\varphi}(f)=\psi\cdot (f^\prime \circ \varphi)$. This operator in one-dimensional case was studied by several people;
see for instance \cite{allen}, \cite{abkar1}, \cite{fat1}, \cite{fat2}, \cite{hib}, \cite{ohno2}, \cite{lia}, \cite{ohno1}.

\section{Composition operators}
This section is devoted to the study of Hilbert-Schmidt weighted composition operators. Our approach is based on direct computations using the standard
orthonormal basis of the space in question.
\begin{theorem}\label{bergman-dn-to-d}
Let $\varphi:\disk\to \disk^n$ be holomorphic, and let $\psi$ be an analytic function on the unit disk.
The operator $C_{\psi,\varphi}:A_\alpha^2(\disk^n)\to A_\beta^2(\disk)$, where $\beta=n(\alpha+2)-2$, is Hilbert-Schmidt if and only if
\begin{equation*}\label{eq1}
\int_{\mathbb{D}}\prod_{k=1}^n
\frac{|\psi(z)|^{2/n}}{(1-\left|\varphi_k(z)\right|^2)^{\alpha+2}}\mrd A_\beta(z)<\infty.
\end{equation*}
\end{theorem}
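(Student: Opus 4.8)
The plan is to use the standard Hilbert--Schmidt criterion: an operator $T$ between separable Hilbert spaces is Hilbert--Schmidt precisely when $\sum_J \|T e_J\|^2 < \infty$ for one (equivalently, every) orthonormal basis $\{e_J\}$ of the domain, and in that case this sum equals $\|T\|_{HS}^2$. I would therefore work with the monomial basis. Since the weighted Bergman space of the polydisk carries the product measure $dA_\alpha(z_1)\cdots dA_\alpha(z_n)$, its orthonormal basis factors as $e_J(z)=\prod_{k=1}^n e_{j_k}(z_k)$, where $e_j(w)=\sqrt{\Gamma(j+\alpha+2)/\blp j!\,\Gamma(\alpha+2)\brp}\,w^j$; the one-variable normalizing constant is obtained from the beta-integral $\intd |w|^{2j}\,dA_\alpha(w)=j!\,\Gamma(\alpha+2)/\Gamma(j+\alpha+2)$, a routine computation in polar coordinates.

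Next I would apply $C_{\psi,\varphi}$ to $e_J$ and compute its norm in the target space. Because $C_{\psi,\varphi}e_J=\psi\cdot(e_J\circ\varphi)$ and $(e_J\circ\varphi)(z)=\prod_{k=1}^n \sqrt{\Gamma(j_k+\alpha+2)/\blp j_k!\,\Gamma(\alpha+2)\brp}\,\varphi_k(z)^{j_k}$, squaring and integrating against $dA_\beta$ gives
$$\|C_{\psi,\varphi}e_J\|^2_{A^2_\beta(\disk)}=\intd |\psi(z)|^2 \prod_{k=1}^n \frac{\Gamma(j_k+\alpha+2)}{j_k!\,\Gamma(\alpha+2)}\,|\varphi_k(z)|^{2j_k}\,dA_\beta(z).$$
Summing over all multi-indices $J\ge 0$ and invoking Tonelli's theorem (the integrand is nonnegative, so summation and integration commute freely) reduces the Hilbert--Schmidt norm to $\intd |\psi(z)|^2\,S(z)\,dA_\beta(z)$, where $S(z)=\sum_{J\ge 0}\prod_{k=1}^n \frac{\Gamma(j_k+\alpha+2)}{j_k!\,\Gamma(\alpha+2)}|\varphi_k(z)|^{2j_k}$.

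The key simplification is then to factor this multi-sum into a product of single sums and identify each factor. Since the summand is a product over $k$ and the sum ranges over all of $\NN^n$, we have $S(z)=\prod_{k=1}^n\sum_{j=0}^\infty \frac{\Gamma(j+\alpha+2)}{j!\,\Gamma(\alpha+2)}|\varphi_k(z)|^{2j}$. Each inner series is the generating-function identity $\sum_{j=0}^\infty \binom{j+\alpha+1}{j} x^j=(1-x)^{-(\alpha+2)}$, valid for $x=|\varphi_k(z)|^2<1$, so $S(z)=\prod_{k=1}^n(1-|\varphi_k(z)|^2)^{-(\alpha+2)}$. Writing $|\psi(z)|^2=\prod_{k=1}^n |\psi(z)|^{2/n}$ and distributing it across the product yields exactly
$$\|C_{\psi,\varphi}\|^2_{HS}=\intd \prod_{k=1}^n \frac{|\psi(z)|^{2/n}}{(1-|\varphi_k(z)|^2)^{\alpha+2}}\,dA_\beta(z),$$
whence the operator is Hilbert--Schmidt if and only if this integral is finite.

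The computation is essentially mechanical, so I do not expect a serious obstacle; the only points that demand care are the justification for exchanging the summation with the integral---handled by nonnegativity via Tonelli---and the recollection of the binomial generating-function identity. I note that the exponent $\beta=n(\alpha+2)-2$ enters only through the target measure $dA_\beta$ and plays no special role in the identity itself, which would hold verbatim for any admissible $\beta>-1$.
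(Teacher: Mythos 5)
Your proposal is correct and follows essentially the same route as the paper: expand in the monomial orthonormal basis of $A^2_\alpha(\disk^n)$, sum $\|C_{\psi,\varphi}e_J\|^2_\beta$ over all multi-indices, interchange sum and integral, and factor the resulting multi-sum into $n$ binomial series each summing to $(1-|\varphi_k(z)|^2)^{-(\alpha+2)}$. If anything, your write-up is more complete than the paper's, which carries out the computation only for $\psi\equiv 1$ and asserts the general case; your explicit Tonelli justification and your observation that the specific value of $\beta$ is immaterial to the identity are both accurate.
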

\begin{proof}
First assume that $\psi(z)\equiv 1$.
Consider the orthonormal basis
$\{e_J(z)\}_{J\ge 0}$, where
$$e_J(z)=\frac{z_1^{j_1}\cdots z_n^{j_n}}{\gamma_{j_1}\cdots\gamma_{j_n}}.$$
We then have
\begin{align*}
\|C_\varphi(e_J)\|_\beta^2 &=\|\frac{\varphi_1^{j_1}\cdots\varphi_n^{j_n}}{\gamma_{j_1}\cdots\gamma_{j_n}}\|^2_\beta\\
&=\int_{\disk}\frac{|\varphi_1|^{2j_1}}{\gamma^2_{j_1}}\cdots\frac{|\varphi_n|^{2j_n}}
{\gamma^2_{j_n}}\mrd A_\beta(z).
\end{align*}
This implies that
\begin{align*}
\sum_{J\ge 0}\|C_\varphi(e_J)\|^2_\beta &=\sum_{J\ge 0}
\int_{\disk}\frac{|\varphi_1(z)|^{2j_1}}{\gamma^2_{j_1}}\cdots \frac{|\varphi_n(z)|^{2j_n}}{\gamma^2_{j_n}}\mrd A_\beta(z)\\
&=\int_{\disk}\sum_{j_1=0}^\infty \frac{|\varphi_1(z)|^{2j_1}}{\gamma^2_{j_1}}\cdots\sum_{j_n=0}^\infty \frac{|\varphi_n(z)|^{2j_n}}{\gamma^2_{j_n}}\mrd A_\beta(z)\\
&=\int_{\disk}\frac{1}{(1-|\varphi_1(z)|^2)^{\alpha+2}}\cdots\frac{1}{(1-|\varphi_n(z)|^2)^{\alpha+2}}\mrd A_\beta(z).
\end{align*}
In general, assume that $\psi$ is a holomorphic function.
The above computations reveal that
$$\sum_{J\ge 0}\|C_{\psi, \varphi}(e_J)\|^2_\beta$$
is finite if and only if
\begin{equation*}\label{eq1}
\int_{\mathbb{D}}\prod_{k=1}^n
\frac{|\psi(z)|^{2/n}}{(1-\left|\varphi_k(z)\right|^2)^{\alpha+2}}\mrd A_\beta(z)<\infty.
\end{equation*}
\end{proof}

\noindent{\bf Remark.} The same argument as in the proof of Theorem \ref{bergman-dn-to-d} shows that if
$\varphi:\disk^m \to \disk^n$ is holomorphic, and if $\psi:\disk^m\to \disk$ is holomorphic, then
$C_{\psi,\varphi}:A_\alpha^2(\disk^n)\to A_\beta^2(\disk^m)$, where $\beta=n(\alpha+2)-2$, is Hilbert-Schmidt if and only if
\begin{equation*}
\int_{\mathbb{D}^m}\prod_{k=1}^n
\frac{|\psi(z)|^{2/n}}{(1-\left|\varphi_k(z)\right|^2)^{\alpha+2}}\mrd V_\beta(z)<\infty.
\end{equation*}

\begin{corollary}\cite[Theorem 33]{szhu2}
The operator $C_{\varphi}:A_\alpha^2(\disk^n)\to A_\beta^2(\disk)$ is Hilbert-Schmidt if and only if
\begin{equation*}\label{eq3}
\int_{\mathbb{D}}\prod_{k=1}^n
 \Big (\frac{1-|z|^2}{1-\left|\varphi_k(z)\right|^2}\Big )^{\alpha+2} \mrd \lambda(z)<\infty,
\end{equation*}
where
$$\mrd\lambda(z)=\frac{\mrd A(z)}{(1-|z|^2)^2}$$
is the  M\"{o}bius invariant measure on the unit disk.
\end{corollary}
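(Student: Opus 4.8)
The plan is to derive this directly from Theorem \ref{bergman-dn-to-d} by specializing to the unweighted case $\psi\equiv 1$ and then algebraically rewriting the resulting integral. Setting $\psi\equiv 1$ in the theorem, so that $|\psi(z)|^{2/n}=1$, gives that $C_\varphi$ is Hilbert-Schmidt if and only if
$$\int_{\disk}\prod_{k=1}^n \frac{1}{(1-|\varphi_k(z)|^2)^{\alpha+2}}\,\mrd A_\beta(z)<\infty.$$
So the entire task reduces to showing that this integral condition is equivalent to the one in the statement.

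To do this I would unfold the definition $\mrd A_\beta(z)=(\beta+1)(1-|z|^2)^\beta\,\mrd A(z)$ and invoke the relation $\beta=n(\alpha+2)-2$. Since the positive constant $(\beta+1)$ has no effect on whether the integral is finite, I may discard it. Writing
$$(1-|z|^2)^\beta=(1-|z|^2)^{n(\alpha+2)}\cdot (1-|z|^2)^{-2},$$
and distributing the factor $(1-|z|^2)^{n(\alpha+2)}=\prod_{k=1}^n (1-|z|^2)^{\alpha+2}$ across the product over $k$, the integrand becomes
$$\prod_{k=1}^n \Big(\frac{1-|z|^2}{1-|\varphi_k(z)|^2}\Big)^{\alpha+2}\cdot \frac{1}{(1-|z|^2)^2}.$$
Finally, recognizing $(1-|z|^2)^{-2}\,\mrd A(z)=\mrd\lambda(z)$ as the M\"{o}bius invariant measure yields exactly the stated condition.

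There is essentially no obstacle here: the argument is purely a regrouping of the weight $(1-|z|^2)^\beta$ against the denominators, together with the discarding of a harmless constant. The only point requiring care is that the value $\beta=n(\alpha+2)-2$ must be used so that the exponent splits precisely into $n$ copies of $(\alpha+2)$, absorbed into the product, plus the leftover $-2$ that supplies the invariant measure $\mrd\lambda$. It is this exact bookkeeping that explains the particular relation between $\alpha$ and $\beta$ imposed in the hypothesis.
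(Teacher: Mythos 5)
Your argument is correct and is essentially identical to the paper's own proof: both specialize Theorem \ref{bergman-dn-to-d} to $\psi\equiv 1$ and rewrite $\mrd A_\beta(z)=(\beta+1)(1-|z|^2)^{n(\alpha+2)}\mrd\lambda(z)$, absorbing the factor $(1-|z|^2)^{n(\alpha+2)}$ into the product over $k$. The paper merely compresses the bookkeeping you spell out explicitly.
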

\begin{proof}
For $\beta=n(\alpha+2)-2$, we have
\begin{align*}\mrd A_\beta (z)&=(\beta+1)(1-|z|^2)^{n(\alpha+2)-2}\mrd A(z)\\
&=(\beta+1)(1-|z|^2)^{n(\alpha+2)}\mrd \lambda(z),
\end{align*}
from which the result follows.
\end{proof}

\begin{theorem}\label{hardy-dn-to-d}
Let $\varphi:\disk\to \disk^n$ be holomorphic, and let $\psi$ be an analytic function on the unit disk.
Then operator $C_{\psi,\varphi}:H^2(\disk^n)\to A_\beta^2(\disk)$, where $\beta=n-2$, is Hilbert-Schmidt if and only if
\begin{equation*}\label{eq3}
\int_{\mathbb{D}}\prod_{k=1}^n\Big(
\frac{|\psi(z)|^{2/n}}{1-\left|\varphi_k(z)\right|^2}\Big)\mrd A_\beta(z)<\infty.
\end{equation*}
\end{theorem}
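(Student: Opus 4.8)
The plan is to follow the proof of Theorem \ref{bergman-dn-to-d} almost verbatim; the only substantive change is that the orthonormal basis of $A^2_\alpha(\disk^n)$ must be replaced by the standard orthonormal basis of the polydisk Hardy space $H^2(\disk^n)$. First I would treat the unweighted case $\psi(z)\equiv 1$ and recall that the monomials $\{z^J\}_{J\ge 0}$, with $z^J=z_1^{j_1}\cdots z_n^{j_n}$, form an orthonormal basis of $H^2(\disk^n)$. The key observation, and the source of the different exponent in the final criterion, is that here \emph{no} normalizing constants occur, because $\int_{\mathbb{T}^n}|z^J|^2\mrd\sigma=1$ for every multi-index $J$; this is in contrast with the Bergman case, where the normalizing factors $\gamma_{j_k}$ appearing in the orthonormal basis are precisely what produced the power $\alpha+2$.

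Next I would compute, for each $J\ge 0$,
$$\|C_\varphi(z^J)\|_\beta^2=\intd|\varphi_1(z)|^{2j_1}\cdots|\varphi_n(z)|^{2j_n}\,\mrd A_\beta(z),$$
and sum over all $J$. Since every term is nonnegative, Tonelli's theorem permits interchanging summation and integration, after which the sum splits as a product of $n$ one-dimensional geometric series. Because $\varphi_k(z)\in\disk$, each such series converges, with
$$\sum_{j_k=0}^\infty|\varphi_k(z)|^{2j_k}=\frac{1}{1-|\varphi_k(z)|^2},$$
so that
$$\sum_{J\ge 0}\|C_\varphi(z^J)\|_\beta^2=\intd\prod_{k=1}^n\frac{1}{1-|\varphi_k(z)|^2}\,\mrd A_\beta(z).$$
This is exactly the asserted condition when $\psi\equiv 1$, and it renders the Hilbert-Schmidt norm finite precisely when the integral is finite.

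To pass to a general analytic $\psi$ on the disk, I would run the same computation with the extra factor $|\psi(z)|^2$ present in every integrand; writing $|\psi(z)|^2=\prod_{k=1}^n|\psi(z)|^{2/n}$ and absorbing one factor into each term of the product then yields the stated criterion. I expect the argument to be entirely routine once the correct basis is in hand: the only step deserving a word of justification is the interchange of the double summation with the integral, which is legitimate by Tonelli's theorem since the integrand is nonnegative. One may also note, as a consistency check, that the value $\beta=n-2$ is the formal specialization $\alpha=-1$ of the exponent $\beta=n(\alpha+2)-2$ appearing in Theorem \ref{bergman-dn-to-d}, reflecting the fact that $H^2$ plays the role of the limiting weighted Bergman space.
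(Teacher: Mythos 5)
Your proposal is correct and follows essentially the same route as the paper: reduce to $\psi\equiv 1$, use the monomials $z^J$ as an orthonormal basis of $H^2(\disk^n)$, interchange sum and integral, and evaluate the resulting product of geometric series. Your explicit appeal to Tonelli's theorem and the remark that the monomials need no normalizing constants (unlike the Bergman case) are welcome clarifications that the paper leaves implicit.
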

\begin{proof}
First assume that $\psi(z)\equiv 1$.
Consider the following orthonormal basis
$$e_J(z)=z_1^{j_1}\cdots z_n^{j_n},\quad J=(j_1,...,j_n),$$
for the Hardy space $H^2(\disk^n)$. We then have
\begin{align*}
\|C_\varphi(e_J)\|_\beta^2 &=
\|\varphi_1^{j_1}\cdots\varphi_n^{j_n}\|^2_\beta\\
&=\int_{\disk}|\varphi_1|^{2j_1}\cdots |\varphi_n|^{2j_n}\mrd A_\beta(z),
\end{align*}
which implies that
\begin{align*}
\sum_{J\ge 0}\|C_\varphi(e_J)\|^2_\beta &=\int_{\disk}\sum_{j_1=0}^\infty |\varphi_1(z)|^{2j_1} \cdots\sum_{j_n=0}^\infty |\varphi_n(z)|^{2j_n}\mrd A_\beta(z)\\
&=\int_{\disk}\prod_{k=1}^n \Big(\frac{1}{1-|\varphi_k(z)|^2}\Big )\mrd A_\beta(z).
\end{align*}
In general, assume that $\psi$ is a holomorphic function on the unit disk.
The above computations reveal that
$$\sum_{J\ge 0}\|C_{\psi, \varphi}(e_J)\|^2_\beta$$
is finite if and only if
\begin{equation*}\label{eq1}
\int_{\mathbb{D}}\prod_{k=1}^n
\frac{|\psi(z)|^{2/n}}{1-\left|\varphi_k(z)\right|^2}\mrd A_\beta(z)<\infty.
\end{equation*}
\end{proof}

\noindent{\bf Remark.} The same argument as in the proof of Theorem \ref{hardy-dn-to-d} shows that if
$\varphi:\disk^m \to \disk^n$ and $\psi:\disk^m\to \disk$ are holomorphic, then
$C_{\psi,\varphi}:H^2(\disk^n)\to A_\beta^2(\disk^m)$, where $\beta=n-2$, is Hilbert-Schmidt if and only if
\begin{equation*}\label{eq3}
\int_{\mathbb{D}^m}\prod_{k=1}^n\Big(
\frac{|\psi(z)|^{2/n}}{1-\left|\varphi_k(z)\right|^2}\Big)\mrd V_\beta(z)<\infty.
\end{equation*}

\begin{corollary}\cite[Theorem 32]{szhu2}
The operator $C_{\varphi}:H^2(\disk^n)\to A_\beta^2(\disk)$ is Hilbert-Schmidt if and only if
\begin{equation*}\label{eq4}
\int_{\mathbb{D}}\Big(\prod_{k=1}^n
 \frac{1-|z|^2}{1-\left|\varphi_k(z)\right|^2}\Big )\mrd \lambda(z)<\infty.
\end{equation*}
\end{corollary}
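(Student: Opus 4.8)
The plan is to obtain this corollary as the special case $\psi \equiv 1$ of Theorem \ref{hardy-dn-to-d}, followed by a routine change of weight that converts the Bergman measure $\mrd A_\beta$ into the M\"obius invariant measure $\mrd\lambda$. This mirrors exactly the argument used for the first corollary, so no new analytic input is needed: the entire content is a bookkeeping identity for the measures.

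First I would specialize Theorem \ref{hardy-dn-to-d} to the unweighted composition operator by setting $\psi(z)\equiv 1$. The theorem then asserts that $C_\varphi:H^2(\disk^n)\to A_\beta^2(\disk)$, with $\beta = n-2$, is Hilbert-Schmidt if and only if
$$\int_{\disk}\prod_{k=1}^n\frac{1}{1-|\varphi_k(z)|^2}\,\mrd A_\beta(z)<\infty.$$
The next step is to rewrite $\mrd A_\beta$ in terms of $\mrd\lambda$. By definition $\mrd A_\beta(z)=(\beta+1)(1-|z|^2)^\beta\,\mrd A(z)=(n-1)(1-|z|^2)^{n-2}\,\mrd A(z)$, and since $\mrd\lambda(z)=\mrd A(z)/(1-|z|^2)^2$ we have $\mrd A(z)=(1-|z|^2)^2\,\mrd\lambda(z)$. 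Substituting gives
$$\mrd A_\beta(z)=(n-1)(1-|z|^2)^{n}\,\mrd\lambda(z).$$

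Finally I would distribute the factor $(1-|z|^2)^n=\prod_{k=1}^n(1-|z|^2)$ across the product, so that the integrand takes the symmetric form
$$(n-1)\prod_{k=1}^n\frac{1-|z|^2}{1-|\varphi_k(z)|^2}.$$
Since the positive constant $n-1$ does not affect the finiteness of the integral, the stated condition follows. The only point demanding the slightest care is keeping the two exponents straight: the shift $\beta=n-2$ contributes $(1-|z|^2)^{n-2}$ and the conversion to $\mrd\lambda$ contributes the remaining $(1-|z|^2)^2$, and these must combine to exactly $(1-|z|^2)^n$ so that one factor of $1-|z|^2$ pairs with each of the $n$ denominators. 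Beyond that there is no real obstacle; the result is an immediate corollary of Theorem \ref{hardy-dn-to-d}.
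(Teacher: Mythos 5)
Your proposal is correct and follows exactly the paper's own route: specialize Theorem \ref{hardy-dn-to-d} to $\psi\equiv 1$ and rewrite $\mrd A_\beta(z)=(n-1)(1-|z|^2)^{n}\,\mrd\lambda(z)$, distributing the factor $(1-|z|^2)^n$ over the $n$ terms of the product. The paper's proof is just a terser version of the same measure-conversion bookkeeping.
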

\begin{proof}
Since $\beta=n-2$, it follows that
\begin{align*}\mrd A_\beta (z)&=(n-1)(1-|z|^2)^{n-2}\mrd A(z)\\
&=(n-1)(1-|z|^2)^{n}\mrd \lambda(z),
\end{align*}
which in turn leads to the desired conclusion.
\end{proof}

As for the Hardy space of the unit ball $\mathbb{B}_n$, we should recall that for a multi-index $J$ we have (see \cite[Prop. 1.4.9]{rud}, or \cite[Lemma 1.11]{zhu2}):
$$\int_{\mathbb{S}_n}|z^J|^2\mrd \sigma(z)=\frac{(n-1)!\,J!}{(n-1+|J|)!}.$$
This implies that the vectors
\begin{equation}\label{on-hardy-ball}
\sqrt{\frac{(n-1+|J|)!}{(n-1)!\,J!}}\, z^J,\quad J=(j_1,...,j_n),
\end{equation}
form an orthonormal basis for $H^2(\mathbb{B}_n)$.

\begin{theorem}\label{hardy-bn-to-d}
Let $n>1$ and let $\varphi:\disk\to \mathbb{B}_n$ be a holomorphic function.
Then the operator $C_{\psi,\varphi}:H^2(\mathbb{B}_n)\to A_\beta^2(\disk)$, where $\beta=n-2$, is Hilbert-Schmidt if and only if
\begin{equation*}\label{eq3}
\int_{\mathbb{D}}
\frac{|\psi(z)|^{2}}{\big(1-\left|\varphi(z)\right|^2\big)^n}\,\mrd A_\beta(z)<\infty.
\end{equation*}
\end{theorem}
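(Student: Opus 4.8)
The plan is to mimic the direct-computation approach of Theorems \ref{bergman-dn-to-d} and \ref{hardy-dn-to-d}, but now using the orthonormal basis $\{e_J\}_{J\ge 0}$ of $H^2(\mathbb{B}_n)$ with $e_J(z)=\sqrt{(n-1+|J|)!/((n-1)!\,J!)}\,z^J$, and exploiting the fact that on the ball the normalizing constants couple the components of $J$ through $|J|$ rather than componentwise. Writing $\varphi=(\varphi_1,\dots,\varphi_n)$ and using $C_{\psi,\varphi}(e_J)=\psi\cdot(e_J\circ\varphi)$, I would first compute, for each multi-index $J$,
\[
\|C_{\psi,\varphi}(e_J)\|_\beta^2
=\frac{(n-1+|J|)!}{(n-1)!\,J!}\int_{\disk}|\psi(z)|^2\,|\varphi_1(z)|^{2j_1}\cdots|\varphi_n(z)|^{2j_n}\,\mrd A_\beta(z).
\]
Here the hypothesis $n>1$ guarantees $\beta=n-2>-1$, so that $A_\beta^2(\disk)$ is a legitimate weighted Bergman space.

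Next I would sum over all $J\ge 0$ and interchange summation and integration, which is justified by Tonelli's theorem since every summand is nonnegative. This reduces the Hilbert--Schmidt norm to
\[
\sum_{J\ge 0}\|C_{\psi,\varphi}(e_J)\|_\beta^2
=\int_{\disk}|\psi(z)|^2\Bigl(\sum_{J\ge 0}\frac{(n-1+|J|)!}{(n-1)!\,J!}\,|\varphi_1(z)|^{2j_1}\cdots|\varphi_n(z)|^{2j_n}\Bigr)\mrd A_\beta(z).
\]

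The crux of the argument, and the step that genuinely differs from the polydisk theorems, is the evaluation of the inner series: unlike in Theorem \ref{hardy-dn-to-d} it does not factor into a product of geometric series, but is instead a negative-binomial (multinomial) sum. Setting $x_k=|\varphi_k(z)|^2$ and combining the multinomial theorem $(x_1+\cdots+x_n)^k=\sum_{|J|=k}\frac{k!}{J!}x^J$ with the identity $(1-t)^{-n}=\sum_{k\ge 0}\frac{(n-1+k)!}{(n-1)!\,k!}t^k$, I expect to obtain
\[
\sum_{J\ge 0}\frac{(n-1+|J|)!}{(n-1)!\,J!}\,x_1^{j_1}\cdots x_n^{j_n}
=\frac{1}{\bigl(1-(x_1+\cdots+x_n)\bigr)^n}
=\frac{1}{\bigl(1-|\varphi(z)|^2\bigr)^n},
\]
the last equality using $|\varphi(z)|^2=\sum_k|\varphi_k(z)|^2<1$, which holds for every $z\in\disk$ precisely because $\varphi$ maps into $\mathbb{B}_n$ (this also ensures convergence of the series). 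Substituting back shows the Hilbert--Schmidt norm equals $\int_{\disk}|\psi(z)|^2\,(1-|\varphi(z)|^2)^{-n}\,\mrd A_\beta(z)$, and since $C_{\psi,\varphi}$ is Hilbert--Schmidt exactly when this quantity is finite, the stated criterion follows. The only real obstacle is recognizing the correct closed form of the multinomial sum; once that identity is established the equivalence is immediate, and it is exactly this summation that accounts for the single factor of exponent $n$ in place of the product appearing in the polydisk case.
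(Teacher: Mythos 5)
Your proposal is correct and follows essentially the same route as the paper: the same orthonormal basis \eqref{on-hardy-ball}, the same Tonelli interchange, and the same combination of the multinomial theorem with the negative-binomial series to evaluate $\sum_{J\ge 0}\frac{(n-1+|J|)!}{(n-1)!\,J!}\,x^J=(1-(x_1+\cdots+x_n))^{-n}$. The only cosmetic difference is that you carry $\psi$ through the computation from the start, whereas the paper first treats $\psi\equiv 1$ and then asserts the general case.
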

\begin{proof}
We consider the orthonormal functions defined by \eqref{on-hardy-ball}.
For $\psi$ identically $1$ we have
\begin{align*}
\|C_\varphi(e_J)\|_\beta^2 &=
\frac{(n-1+|J|)!}{(n-1)!\,J!}\|\varphi_1^{j_1}\cdots\varphi_n^{j_n}\|^2_\beta\\
&=\frac{(n-1+|J|)!}{(n-1)!\,J!}\int_{\disk}|\varphi_1|^{2j_1}\cdots |\varphi_n|^{2j_n}\mrd A_\beta(z).
\end{align*}
Therefore
\begin{align*}
\sum_{J\ge 0}\|C_\varphi(e_J)\|^2_\beta &=\int_{\disk}\sum_{J\ge 0}\frac{(n-1+|J|)!}{(n-1)!\,J!}\left(|\varphi_1|^{2}\cdots |\varphi_n|^{2}\right)^J\mrd A_\beta(z)\\
&=\int_{\disk}\sum_{k=0}^\infty \sum_{|J|=k}\frac{(k+n-1)!}{(n-1)!}\frac{\left(|\varphi_1|^{2}\cdots |\varphi_n|^{2}\right)^J}{J!}\mrd A_\beta(z).
\end{align*}
On the other hand, it follows from the multi-nomial formula that
\begin{equation}\label{multi}
\sum_{|J|=k}\frac{\left(|\varphi_1|^{2}\cdots |\varphi_n|^{2}\right)^J}{J!}=\frac{\left(|\varphi_1|^{2}+\cdots+ |\varphi_n|^{2}\right)^k}{k!}=\frac{|\varphi|^{2k}}{k!}.
\end{equation}
This, in turn, leads to
\begin{align*}
\sum_{J\ge 0}\|C_\varphi(e_J)\|^2_\beta &=\int_{\disk}\sum_{k=0}^\infty\frac{(k+n-1)!}{(n-1)!\, k!}|\varphi(z)|^{2k}\mrd A_\beta(z)\\
&=\int_{\disk}\sum_{k=0}^\infty\frac{(k+1)(k+2)\cdots (k+n-1)}{(n-1)!}|\varphi(z)|^{2k}\mrd A_\beta(z)\\
&=\int_{\disk}\frac{1}{(1-|\varphi(z)|^2)^n}\mrd A_\beta(z).
\end{align*}
In general, we obtain
$$\sum_{J\ge 0}\|C_{\psi,\varphi}(e_J)\|^2_\beta=\int_{\mathbb{D}}
\frac{|\psi(z)|^{2}}{\big(1-\left|\varphi(z)\right|^2\big)^n}\,\mrd A_\beta(z),$$
from which the result follows.
\end{proof}

\begin{corollary}\cite[Theorem 30]{szhu2}
$C_{\varphi}:H^2(\mathbb{B}_n)\to A_\beta^2(\disk)$ is Hilbert-Schmidt if and only if
\begin{equation*}\label{eq5}
\int_{\mathbb{D}}
 \left(\frac{1-|z|^2}{1-\left|\varphi(z)\right|^2}\right)^n \mrd \lambda(z)<\infty.
\end{equation*}
\end{corollary}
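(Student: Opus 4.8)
The plan is to specialize Theorem \ref{hardy-bn-to-d} to the unweighted case $\psi\equiv 1$ and then rewrite the resulting condition in terms of the M\"obius invariant measure $\mrd\lambda$, exactly as in the proofs of the two preceding corollaries. Setting $\psi(z)\equiv 1$ in Theorem \ref{hardy-bn-to-d} shows that $C_\varphi:H^2(\mathbb{B}_n)\to A_\beta^2(\disk)$ with $\beta=n-2$ is Hilbert-Schmidt precisely when
$$\int_{\disk}\frac{1}{\big(1-|\varphi(z)|^2\big)^n}\,\mrd A_\beta(z)<\infty.$$

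First I would record the relation between the two measures. Since $\beta=n-2$, the definition $\mrd A_\beta(z)=(\beta+1)(1-|z|^2)^\beta\,\mrd A(z)$ gives
$$\mrd A_\beta(z)=(n-1)(1-|z|^2)^{n-2}\,\mrd A(z)=(n-1)(1-|z|^2)^{n}\,\mrd\lambda(z),$$
where in the last step I use $\mrd\lambda(z)=\mrd A(z)/(1-|z|^2)^2$. This is the same substitution that appears in the previous two corollaries, only with the exponent shifted according to the present value of $\beta$.

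Substituting this identity into the displayed integral, the factor $(1-|z|^2)^n$ combines with the denominator $(1-|\varphi(z)|^2)^n$ to produce the single power $\big((1-|z|^2)/(1-|\varphi(z)|^2)\big)^n$, while the constant $(n-1)$ does not affect finiteness. Hence the criterion of Theorem \ref{hardy-bn-to-d} is equivalent to
$$\int_{\disk}\left(\frac{1-|z|^2}{1-|\varphi(z)|^2}\right)^n\,\mrd\lambda(z)<\infty,$$
which is the claimed condition and recovers \cite[Theorem 30]{szhu2}. I do not expect any genuine obstacle here: once Theorem \ref{hardy-bn-to-d} is available, the entire argument is a one-line change of measure, and the only point requiring minor care is tracking the exponent $\beta=n-2$ so that the weight $(1-|z|^2)^\beta$ against the density of $\mrd\lambda$ yields exactly the $n$th power rather than some neighbouring exponent.
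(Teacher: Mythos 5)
Your proposal is correct and follows exactly the paper's own argument: specialize Theorem \ref{hardy-bn-to-d} to $\psi\equiv 1$ and apply the change of measure $\mrd A_\beta(z)=(n-1)(1-|z|^2)^{n}\,\mrd\lambda(z)$ for $\beta=n-2$. The exponent bookkeeping is right, so nothing more is needed.
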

\begin{proof}
As in the preceding case, we have
\begin{align*}
\mrd A_\beta (z)=(n-1)(1-|z|^2)^{n}\mrd \lambda(z),
\end{align*}
from which the result follows.
\end{proof}

\begin{theorem}\label{thm3}
Let $\alpha>-1$ and let $\varphi:\disk\to \mathbb{B}_n$ be a holomorphic function.
Then the operator $C_{\psi,\varphi}:A_\alpha^2(\mathbb{B}_n)\to A_\beta^2(\disk)$, where $\beta=n-1+\alpha$, is Hilbert-Schmidt if and only if
\begin{equation*}\label{eq11}
\int_{\mathbb{D}}
\frac{|\psi(z)|^{2}}{\big(1-\left|\varphi(z)\right|^2\big)^{n+\alpha+1}}\,\mrd A_\beta(z)<\infty.
\end{equation*}
\end{theorem}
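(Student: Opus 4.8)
The plan is to follow the direct-computation strategy of Theorem \ref{hardy-bn-to-d}, but with the orthonormal basis \eqref{on-bergman-ball} of $A^2_\alpha(\mathbb{B}_n)$ replacing the Hardy-space basis. Recall that the Hilbert--Schmidt norm of $C_{\psi,\varphi}$ is the square root of $\sum_{J\ge 0}\|C_{\psi,\varphi}(e_J)\|^2_\beta$ computed against any orthonormal basis of the domain, so it suffices to evaluate this sum. First I would take $\psi\equiv 1$, write $e_J$ for the vectors in \eqref{on-bergman-ball}, and record that
$$\|C_\varphi(e_J)\|^2_\beta=\frac{\Gamma(n+|J|+\alpha+1)}{J!\,\Gamma(n+\alpha+1)}\int_\disk |\varphi_1|^{2j_1}\cdots|\varphi_n|^{2j_n}\,\mrd A_\beta(z).$$
Summing over $J\ge 0$ and interchanging the sum with the integral (all integrands being nonnegative, Tonelli's theorem applies and the equality holds in $[0,\infty]$), I would then group the multi-indices according to $|J|=k$ and apply the multinomial identity \eqref{multi} to collapse the integrand into a single power series in $|\varphi(z)|^{2k}$.

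The crux of the argument is to evaluate that series in closed form, namely
$$\sum_{k=0}^\infty \frac{\Gamma(n+k+\alpha+1)}{\Gamma(n+\alpha+1)\,k!}\,|\varphi(z)|^{2k}=\frac{1}{\big(1-|\varphi(z)|^2\big)^{n+\alpha+1}}.$$
This is the generalized binomial expansion $(1-x)^{-s}=\sum_{k\ge 0}\frac{\Gamma(s+k)}{\Gamma(s)\,k!}\,x^k$ evaluated at $s=n+\alpha+1$ and $x=|\varphi(z)|^2<1$; it is the exact analogue of the binomial-coefficient computation carried out in Theorem \ref{hardy-bn-to-d}, which is recovered formally in the boundary case $\alpha\to -1$. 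I expect this series evaluation to be the only nonroutine step, and it is standard.

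Having disposed of the case $\psi\equiv 1$, I would conclude by reinstating a general holomorphic $\psi$. Since $C_{\psi,\varphi}(e_J)=\psi\cdot(e_J\circ\varphi)$, the factor $|\psi(z)|^2$ multiplies every term of the sum over $J$ and therefore pulls out of the summation intact, yielding
$$\sum_{J\ge 0}\|C_{\psi,\varphi}(e_J)\|^2_\beta=\int_\disk \frac{|\psi(z)|^2}{\big(1-|\varphi(z)|^2\big)^{n+\alpha+1}}\,\mrd A_\beta(z).$$
As the left-hand side equals the squared Hilbert--Schmidt norm, the operator $C_{\psi,\varphi}$ is Hilbert--Schmidt if and only if the displayed integral is finite, which is precisely the asserted condition.
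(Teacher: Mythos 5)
Your proposal is correct and follows essentially the same route as the paper's own proof: the orthonormal basis \eqref{on-bergman-ball}, the regrouping by $|J|=k$ via the multinomial identity \eqref{multi}, and the generalized binomial series $(1-x)^{-(n+\alpha+1)}$ are exactly the steps used there. Your explicit appeal to Tonelli for the sum--integral interchange is a small added courtesy the paper leaves implicit.
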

\begin{proof}
It is easy to see that the vectors
\begin{equation}\label{on-bergman-ball}
e_J(z)=\sqrt{\frac{\Gamma(n+|J|+\alpha+1)}{J!\, \Gamma(n+\alpha+1)}}\, z^J,\quad J=(j_1,...,j_n).
\end{equation}
form an orthonormal basis for $A_\alpha^2(\mathbb{B}_n)$.
As in the preceding theorem we have
\begin{align*}
\|C_\varphi(e_J)\|_\beta^2=
\frac{\Gamma(n+|J|+\alpha+1)}{\Gamma(n+\alpha+1)\,J!}\int_{\disk}|\varphi_1|^{2j_1}\cdots |\varphi_n|^{2j_n}\mrd A_\beta(z).
\end{align*}
Therefore
\begin{align*}
\sum_{J\ge 0}\|C_\varphi(e_J)\|^2_\beta &=
\int_{\disk}\sum_{k=0}^\infty \sum_{|J|=k}\frac{\Gamma(k+n+\alpha+1)}{\Gamma(n+\alpha+1)}\frac{\left(|\varphi_1|^{2}\cdots |\varphi_n|^{2}\right)^J}{J!}\mrd A_\beta(z)\\
&=\int_{\disk}\sum_{k=0}^\infty \frac{\Gamma(k+n+\alpha+1)}{\Gamma(n+\alpha+1)\, k!}|\varphi|^{2k}\mrd A_\beta(z)\\
&=\int_{\disk}\frac{1}{\big(1-|\varphi(z)|^2\big)^{n+\alpha+1}}\mrd A_\beta(z).
\end{align*}
This implies that
$\sum_{J\ge 0}\|C_{\psi,\varphi}(e_J)\|^2_\beta$ is finite if and only if
$$\int_{\disk}\frac{1}{\big(1-|\varphi(z)|^2\big)^{n+\alpha+1}}\mrd A_\beta(z)<\infty.$$
\end{proof}
\begin{corollary}\cite[Theorem 31]{szhu2}.
Let $\alpha>-1$ and let $\varphi:\disk\to \mathbb{B}_n$ be a holomorphic function.
Then the operator $C_{\varphi}:A_\alpha^2(\mathbb{B}_n)\to A_{n-1+\alpha}^2(\disk)$ is Hilbert-Schmidt if and only if
\begin{equation*}\label{eq12}
\int_{\mathbb{D}}
\bigg(\frac{1-|z|^2}{1-\left|\varphi(z)\right|^2}\bigg)^{n+\alpha+1}\,\mrd \lambda(z)<\infty.
\end{equation*}
\end{corollary}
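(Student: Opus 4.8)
The plan is to derive this corollary as the special case $\psi\equiv 1$ of Theorem \ref{thm3}, exactly mirroring the three corollaries that precede it. Setting $\psi\equiv 1$ in Theorem \ref{thm3} tells us that $C_\varphi:A_\alpha^2(\mathbb{B}_n)\to A_\beta^2(\disk)$ with $\beta=n-1+\alpha$ is Hilbert-Schmidt if and only if
$$\int_{\disk}\frac{1}{\big(1-|\varphi(z)|^2\big)^{n+\alpha+1}}\,\mrd A_\beta(z)<\infty.$$
The only task remaining is to recognize that this integral, against the weighted area measure $\mrd A_\beta$, is a constant multiple of the integral against the M\"obius invariant measure $\mrd\lambda$ that appears in the statement. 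So the whole proof is a change of the underlying measure.

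First I would unwind the definition $\mrd A_\beta(z)=(\beta+1)(1-|z|^2)^\beta\,\mrd A(z)$ at $\beta=n-1+\alpha$, which gives $\beta+1=n+\alpha$ and exponent $\beta=n-1+\alpha$. Using $\mrd\lambda(z)=\mrd A(z)/(1-|z|^2)^2$, I would factor out two powers of $(1-|z|^2)$ to write
$$\mrd A_\beta(z)=(n+\alpha)(1-|z|^2)^{n-1+\alpha}\,\mrd A(z)=(n+\alpha)(1-|z|^2)^{n+1+\alpha}\,\mrd\lambda(z).$$
Substituting this into the Hilbert-Schmidt condition above, the $(1-|z|^2)^{n+1+\alpha}$ factor pairs with the denominator exponent $n+\alpha+1$ to produce the ratio $\big((1-|z|^2)/(1-|\varphi(z)|^2)\big)^{n+\alpha+1}$, and the positive constant $n+\alpha$ pulls out of the integral without affecting finiteness. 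Hence the condition is equivalent to
$$\int_{\disk}\bigg(\frac{1-|z|^2}{1-|\varphi(z)|^2}\bigg)^{n+\alpha+1}\,\mrd\lambda(z)<\infty,$$
which is precisely the claimed criterion. There is no genuine obstacle here: the entire argument is the bookkeeping of exponents in the transition from $\mrd A_\beta$ to $\mrd\lambda$, and the only point demanding a moment of care is confirming that the two powers of $(1-|z|^2)$ absorbed into $\mrd\lambda$ leave exactly the exponent $n+\alpha+1$ in the numerator, matching the denominator so that the integrand becomes a pure ratio raised to that power.
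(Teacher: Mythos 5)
Your proposal is correct and follows exactly the paper's own route: specialize Theorem \ref{thm3} to $\psi\equiv 1$ and rewrite $\mrd A_\beta(z)=(n+\alpha)(1-|z|^2)^{n+\alpha+1}\mrd\lambda(z)$, which is precisely the one-line computation the paper gives. The exponent bookkeeping in your version is accurate, so there is nothing to add.
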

\begin{proof}
Note that in this case
$$\mrd A_\beta(z)=(n+\alpha)(1-|z|^2)^{n+\alpha+1}\mrd \lambda(z).$$
\end{proof}

\section{Composition-differentiation operators}
When dealing with the theory of composition operators in the unit disk, one usually considers the composition-differentiation operator defined by
$$D_\varphi (f)=f^\prime\circ \varphi.$$
Some authors consider $D_\varphi$ as the composition of two successive operators $C_\varphi$ and $D$ where $D(f)=f^\prime$ is the differentiation operator. For this reason, they use the notation $C_\varphi D$ for what we denoted by $D_\varphi$. This operator was studied in detail by several authors; see for instance \cite{abkar1}, \cite{fat1}, \cite{fat2}, \cite{ohno2}, \cite{lia}, \cite{ohno1}.
\par
To generalize this definition to several complex variables, one encounters with the following two notions
of differentiation. The first one is to use the gradient of $f$, that is,
$$\nabla f=\left(\frac{\partial f}{\partial z_1},\cdots, \frac{\partial f}{\partial z_n}\right).$$
Using this concept, one may define
$$G_\varphi (f)=(\nabla f)\circ \varphi=\left(\frac{\partial f}{\partial z_1}\circ\varphi,\cdots, \frac{\partial f}{\partial z_n}\circ\varphi\right).$$
It is also possible to use the invariant gradient $\widetilde{\nabla}f$ instead of the gradient of $f$.
Another approach is to use the notion of radial derivative of a holomorphic function. It turns out that the second approach is more suitable for our purposes.
For a function $f$ holomorphic in the unit ball $\mathbb{B}_n$, we write
$$Rf(z)=\sum_{k=1}^n z_k\frac{\partial f}{\partial z_k}(z).$$
It is easy to verify that if
$$f(z)=\sum_{k=0}^\infty f_k(z)$$
is the homogenous expansion of $f$, then
$$Rf(z)=\sum_{k=1}^\infty kf_k(z).$$
Note that if
$f(z)=e_J(z)=z_1^{j_1}\cdots z_n^{j_n}$ is a multi-nomial,
then
$f(z)=f_k(z)$ where $k=|J|$. This implies that
\begin{equation}\label{rfphi}
Rf(\varphi(z))=k\,e_J(\varphi(z))=|J|\, \varphi_1^{j_1}\cdots\varphi_n^{j_n}.
\end{equation}
This identity reveals that the radial derivative is a good candidate to play the role of $f^\prime$ when $f$ is a
holomorphic function of several complex variables.
\par
As in the case of usual differentiation, one can define higher order radial derivatives. Let $t$ be a real number.
We define the radial differential operator $R^t$ as follows:
$$R^tf(z)=\sum_{k=1}^\infty k^t f_k(z),\quad f(z)=\sum_{k=0}^\infty f_k(z).$$
We now define the $n$-variable weighted composition-differentiation operator
\begin{equation}\label{one-variable}
{E}_{\psi,\varphi}(f)=\psi\cdot \left((Rf) \circ \varphi\right),\quad f\in \hol(\mathbb{B}_n),
\end{equation}
and the higher order $n$-variable weighted composition-differentiation operator
\begin{equation}\label{higher-order}
{E}_{\psi,\varphi}^t(f)=\psi\cdot \left((R^tf) \circ \varphi\right),\quad t\ge 0,\,\,  f\in \hol(\mathbb{B}_n).
\end{equation}
It is easy to see that if
$f(z)=e_J(z)=z_1^{j_1}\cdots z_n^{j_n}$,
then
\begin{equation}\label{rtfphi}
R^tf(\varphi(z))=k^t\, \varphi_1^{j_1}\cdots\varphi_n^{j_n},\quad k=|J|.
\end{equation}
For more information on the concept of fractional radial derivatives and their applications in function theory of several complex variables, we refer the reader to the book authored by Kehe Zhu \cite{zhu2}; who first introduced this notion in \cite{kzhu}.\par
In the following, we aim to study the conditions that are equivalent to Hilbert-Schmidtness of the operators ${E}_{\psi,\varphi}^t$ for $t\ge 0$.
Before we shift to our main discussion, it is instructive to see how the weighted composition-differentiation operator
$$D_{\psi,\varphi}(f):=\psi(z) f^\prime (\varphi(z))$$
behaves in one-dimensional case. We recall that an operator $T$ defined from a Hilbert space $H_1$ to another Hilbert space $H_2$ is called Hilbert-Schmidt if for some orthonormal basis $\{e_n\}$ in $H_1$ we have $\sum_n \Vert Te_n\Vert ^2<\infty$.

\begin{theorem}\label{one-dimension}
Let $\varphi$ be an analytic self-map of the unit disk, and $\psi$ be an analytic function $\disk$. Then $D_{\psi,\varphi}$ is a Hilbert-Schmidt operator on $A_\alpha^2(\disk)$ if and only if
$$\int_\mathbb{D}
\frac{|\psi(z)|^2}{\big(1-\left |\varphi(z)\right|^2\big)^{4+\alpha}}\mrd A_\alpha(z)<\infty.$$
\end{theorem}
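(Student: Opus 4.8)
The plan is to follow exactly the direct approach used throughout \S2: compute the Hilbert--Schmidt sum on the standard orthonormal basis of $A_\alpha^2(\disk)$ and reduce it to the stated integral. First I would take the basis obtained by specializing \eqref{on-bergman-ball} to $n=1$, namely
$$e_j(z)=\sqrt{\frac{\Gamma(j+\alpha+2)}{j!\,\Gamma(\alpha+2)}}\,z^j,\qquad j=0,1,2,\dots.$$
Differentiation brings down a factor $j$, so that $D_{\psi,\varphi}(e_j)(z)=\psi(z)\,e_j'(\varphi(z))=\sqrt{\tfrac{\Gamma(j+\alpha+2)}{j!\,\Gamma(\alpha+2)}}\,j\,\psi(z)\,\varphi(z)^{\,j-1}$, and therefore
$$\|D_{\psi,\varphi}(e_j)\|_\alpha^2=\frac{\Gamma(j+\alpha+2)}{j!\,\Gamma(\alpha+2)}\,j^2\int_{\disk}|\psi(z)|^2\,|\varphi(z)|^{2(j-1)}\,\mrd A_\alpha(z).$$

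Next I would sum over $j$ and interchange the sum and the integral, which is legitimate because every term is nonnegative (Tonelli). This reduces the whole problem to evaluating the series
$$S(x)=\sum_{j\ge1}\frac{\Gamma(j+\alpha+2)}{j!\,\Gamma(\alpha+2)}\,j^2\,x^{\,j-1},\qquad x=|\varphi(z)|^2\in[0,1),$$
since then $\sum_{j\ge0}\|D_{\psi,\varphi}(e_j)\|_\alpha^2=\int_{\disk}|\psi(z)|^2\,S(|\varphi(z)|^2)\,\mrd A_\alpha(z)$.

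The key step is to put $S$ in closed form. Writing $g(x)=\sum_{j\ge0}\frac{\Gamma(j+\alpha+2)}{j!\,\Gamma(\alpha+2)}x^j=(1-x)^{-(\alpha+2)}$ for the familiar generating function (the reproducing-kernel expansion), one has $S(x)=\frac{d}{dx}\bigl(x\,g'(x)\bigr)=g'(x)+x\,g''(x)$, and a direct computation with $g'(x)=(\alpha+2)(1-x)^{-(\alpha+3)}$, $g''(x)=(\alpha+2)(\alpha+3)(1-x)^{-(\alpha+4)}$ yields
$$S(x)=(\alpha+2)\,\frac{1+(\alpha+2)x}{(1-x)^{\alpha+4}}.$$
Since $\alpha>-1$ the factor $1+(\alpha+2)x$ is increasing on $[0,1)$ and satisfies $1\le 1+(\alpha+2)x\le \alpha+3$, so $S(x)$ is comparable to $(1-x)^{-(\alpha+4)}$ with constants depending only on $\alpha$. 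Hence $\sum_{j}\|D_{\psi,\varphi}(e_j)\|_\alpha^2$ is finite if and only if $\int_{\disk}|\psi(z)|^2(1-|\varphi(z)|^2)^{-(4+\alpha)}\,\mrd A_\alpha(z)<\infty$, which is the assertion.

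I expect the main obstacle to be precisely this closed-form evaluation of $S$: the extra factor $j^2$ (the square of the $j$ produced by differentiation) makes this a second-order operation on the generating function rather than the single geometric sum $\sum x^j=(1-x)^{-1}$ used in \S2, and one must verify that the polynomial numerator $1+(\alpha+2)x$ is bounded away from both $0$ and $\infty$ on $[0,1)$ so that the comparison with $(1-x)^{-(4+\alpha)}$ is genuinely two-sided; everything else is a routine basis computation together with the Tonelli interchange.
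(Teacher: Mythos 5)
Your proposal is correct and follows essentially the same route as the paper: expand $D_{\psi,\varphi}$ on the standard orthonormal basis of $A^2_\alpha(\disk)$, interchange sum and integral by positivity, and show that the resulting series in $x=|\varphi(z)|^2$ is comparable to $(1-x)^{-(\alpha+4)}$. The only (harmless) divergence is at the final step, where you evaluate the series exactly as $(\alpha+2)\bigl(1+(\alpha+2)x\bigr)(1-x)^{-(\alpha+4)}$ by differentiating the generating function, whereas the paper reindexes and invokes the asymptotic $\Gamma(k+x)\asymp k^x\,\Gamma(x)$ to compare coefficients; your closed form makes the two-sided constants explicit and is, if anything, slightly cleaner.
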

\begin{proof}
We consider the orthonormal basis that consists of the unit vectors
$$e_k(z)=\sqrt{\frac{\Gamma(k+\alpha+2)}{k!\,\Gamma(\alpha+2)}}\,z^k,\quad k=0,1,2,\cdots.$$
We now have
\begin{align*}
\sum_{k=0}^\infty \Vert D_{\psi,\varphi}(e_k)\Vert^2_{A^2_\alpha}&=
\sum_{k=1}^\infty\int_\mathbb{D}\bigg |\psi(z)
 k\sqrt{\frac{\Gamma(k+\alpha+2)}{k!\,\Gamma(\alpha+2)}}\,\varphi(z)^{k-1}\bigg |^2\mrd A_\alpha(z)\\
&=\int_\mathbb{D}|\psi(z)|^2
\sum_{k=1}^\infty \frac{k\,\Gamma(k+\alpha+2)}{(k-1)!\,\Gamma(\alpha+2)}
\left|\varphi(z)\right |^{2(k-1)}\mrd A_\alpha(z)\\
&=\int_\mathbb{D}|\psi(z)|^2
\sum_{k=0}^\infty \frac{(k+1)\,\Gamma(k+\alpha+3)}{k!\,\Gamma(\alpha+2)}
\left|\varphi(z)\right |^{2k}\mrd A_\alpha(z)
\end{align*}
We now use the fact that (for $x>0$)
$$\Gamma(k+x)\asymp k^x\,\Gamma(x),\quad \, k\to \infty,$$
to conclude that
\begin{align*}
\sum_{k=0}^\infty \Vert D_{\psi,\varphi}(e_k)\Vert^2_{A^2_\alpha}&=
\int_\mathbb{D}|\psi(z)|^2
\sum_{k=0}^\infty \frac{(k+1)\,\Gamma(k+\alpha+3)}{k!\,\Gamma(\alpha+2)}
\left|\varphi(z)\right |^{2k}\mrd A_\alpha(z)\\
&\asymp
\int_\mathbb{D}|\psi(z)|^2
\sum_{k=0}^\infty \frac{\Gamma(k+\alpha+4)}{k!\,\Gamma(\alpha+4)}
\left|\varphi(z)\right |^{2k}\mrd A_\alpha(z)\\
&=\int_\mathbb{D}
\frac{|\psi(z)|^2}
{\big(1-\left|\varphi(z)\right|^{2}\big)^{\alpha+4}}
\mrd A_\alpha(z).
\end{align*}
This completes the proof.
\end{proof}

We now return to the $n$-variable definition of weighted composition-differentiation operator
$$E_{\psi,\varphi}(f)=\psi\cdot \left((Rf) \circ \varphi\right),\quad f\in \hol(\mathbb{B}_n).$$
\begin{theorem}\label{rf-bergman-ball}
Let $\alpha>-1$ and let $\varphi:\disk\to \mathbb{B}_n$ be a holomorphic function.
Then the operator $E_{\psi,\varphi}:A_\alpha^2(\mathbb{B}_n)\to A_\beta^2(\disk)$, where $\beta=n-1+\alpha$, is Hilbert-Schmidt if and only if
\begin{equation*}\label{eq21}
\int_{\mathbb{D}}
\frac{|\psi(z)|^{2}}{\big(1-\left|\varphi(z)\right|^2\big)^{n+\alpha+3}}\,\mrd A_\beta(z)<\infty.
\end{equation*}
\end{theorem}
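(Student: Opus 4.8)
The plan is to repeat the computation of Theorem \ref{thm3}, now keeping track of the extra factor produced by the radial derivative. Working with the orthonormal basis \eqref{on-bergman-ball} and writing $c_J=\sqrt{\Gamma(n+|J|+\alpha+1)/(J!\,\Gamma(n+\alpha+1))}$, identity \eqref{rfphi} gives $Re_J=|J|\,e_J$, so that
\[ E_{\psi,\varphi}(e_J)=\psi\cdot\big((Re_J)\circ\varphi\big)=|J|\,c_J\,\psi\,\varphi_1^{j_1}\cdots\varphi_n^{j_n}. \]
Squaring, integrating against $\mrd A_\beta$, summing over $J\ge0$ (the summand is nonnegative, so Tonelli applies), grouping the terms with $|J|=k$, and collapsing the inner sum by the multinomial identity \eqref{multi} reduces the Hilbert--Schmidt sum to
\[ \sum_{J\ge0}\|E_{\psi,\varphi}(e_J)\|_\beta^2=\int_{\disk}|\psi(z)|^2\sum_{k=0}^\infty\frac{k^2\,\Gamma(k+n+\alpha+1)}{\Gamma(n+\alpha+1)\,k!}\,|\varphi(z)|^{2k}\,\mrd A_\beta(z). \]

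The next step is to read off the order of the singularity of the inner series. Setting $a_k=k^2\,\Gamma(k+n+\alpha+1)/(\Gamma(n+\alpha+1)\,k!)$ and applying the estimate $\Gamma(k+x)\asymp k^x\Gamma(x)$ used in Theorem \ref{one-dimension}, I get $a_k\asymp k^{\,n+\alpha+2}$; these are exactly the asymptotics of the Taylor coefficients $b_k=\Gamma(k+n+\alpha+3)/(k!\,\Gamma(n+\alpha+3))$ of $(1-x)^{-(n+\alpha+3)}$. Since the ratio $a_k/b_k$ is continuous in $k$ and tends to the positive constant $(n+\alpha+2)(n+\alpha+1)$, the comparison $a_k\asymp b_k$ holds uniformly for $k\ge1$, and summing against $x=|\varphi(z)|^2$ yields the claimed singular order $n+\alpha+3$.

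The main obstacle --- and the one point where the present operator differs structurally from the composition operator of Theorem \ref{thm3} --- is that $R$ annihilates constants, so $a_0=0$ while $b_0=1$. Consequently the coefficient comparison is legitimate only for $k\ge1$, and what the series is genuinely equivalent to is $(1-x)^{-(n+\alpha+3)}-1$ rather than $(1-x)^{-(n+\alpha+3)}$. These two weights agree near the boundary $|\varphi(z)|\to1$, where the only singularity of the integrand is located, and differ only by the bounded quantity $|\psi(z)|^2$ on the rest of the disk. To turn this into the stated equivalence I would split $\disk$ into the sets $\{|\varphi|>r_0\}$ and $\{|\varphi|\le r_0\}$ for some $r_0<1$: on the first set the two integrands are comparable by the uniform estimate above, while on the second both are bounded multiples of $|\psi(z)|^2$. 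Thus, under the natural normalization $\psi\in A_\beta^2(\disk)$, the finiteness of $\sum_{J\ge0}\|E_{\psi,\varphi}(e_J)\|_\beta^2$ is equivalent to the finiteness of the displayed integral, which completes the proof.
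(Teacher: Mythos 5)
Your proof follows the same route as the paper's: expand $E_{\psi,\varphi}$ on the monomial orthonormal basis of $A_\alpha^2(\mathbb{B}_n)$, collapse the sum over $|J|=k$ by the multinomial identity, and compare the resulting coefficients $a_k=k^2\Gamma(k+n+\alpha+1)/(\Gamma(n+\alpha+1)k!)$ with the Taylor coefficients of $(1-x)^{-(n+\alpha+3)}$ via $\Gamma(k+x)\asymp k^x\Gamma(x)$. The one point where you go beyond the paper --- observing that $a_0=0$ while $b_0=1$, so the termwise comparison only identifies the series with $(1-x)^{-(n+\alpha+3)}-1$ and the ``only if'' direction requires either the normalization $\psi\in A_\beta^2(\disk)$ or a splitting of $\disk$ along a level set of $|\varphi|$ --- is a genuine subtlety that the paper's proof silently elides (its two displayed series are not comparable at $k=0$, and indeed for $\varphi\equiv 0$ the operator is Hilbert--Schmidt for every $\psi$), and your patch is correct.
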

\begin{proof}
We start by the orthonormal vectors \eqref{on-bergman-ball}
for $A_\alpha^2(\mathbb{B}_n)$. It follows that
\begin{align*}
\|E_{\psi,\varphi}(e_J)\|_\beta^2=
\frac{|J|^2\,\Gamma(n+|J|+\alpha+1)}{\Gamma(n+\alpha+1)\,J!}\int_{\disk}|\psi(z)|^2\left(|\varphi_1|^{2j_1}\cdots |\varphi_n|^{2j_n}\right)\mrd A_\beta(z).
\end{align*}
Using the multi-nomial formula
\begin{equation}\label{multi}
\sum_{|J|=k}\frac{\left(|\varphi_1|^{2}\cdots |\varphi_n|^{2}\right)^J}{J!}=\frac{\left(|\varphi_1|^{2}+\cdots+ |\varphi_n|^{2}\right)^k}{k!}=\frac{|\varphi|^{2k}}{k!},
\end{equation}
 and \eqref{rfphi} we obtain
\begin{align*}
\sum_{J\ge 0}\|E_{\psi,\varphi}(e_J)\|^2_\beta &=
\int_{\disk}|\psi(z)|^2\sum_{k=0}^\infty \sum_{|J|=k}\frac{|J|^2\,\Gamma(|J|+n+\alpha+1)}{\Gamma(n+\alpha+1)}\frac{\left(|\varphi_1|^{2}\cdots |\varphi_n|^{2}\right)^J}{J!}\mrd A_\beta(z)\\
&=\int_{\disk}|\psi(z)|^2 \sum_{k=0}^\infty \frac{k^2\,\Gamma(k+n+\alpha+1)}{\Gamma(n+\alpha+1)\, k!}|\varphi|^{2k}\mrd A_\beta(z)\\
&\asymp\int_{\disk}|\psi(z)|^2 \sum_{k=0}^\infty \frac{\Gamma(k+n+\alpha+3)}{\Gamma(n+\alpha+3)\, k!}|\varphi|^{2k}\mrd A_\beta(z)\\
&=\int_{\disk}\frac{|\psi(z)|^2}{\big(1-|\varphi(z)|^2\big)^{n+\alpha+3}}\mrd A_\beta(z),
\end{align*}
from which the result follows.
\end{proof}

Note that if $n=1$, then $\beta=\alpha$, so that the above result reduces to the one obtained in Theorem \ref{one-dimension}. This phenomenon confirms that the new notion of radial differentiation is a good generalization of the usual concept of differentiation.

\begin{theorem}\label{rf-hardy-ball}
Let $n>1$ and let $\varphi:\disk\to \mathbb{B}_n$ be a holomorphic function.
Then the operator $E_{\psi,\varphi}:H^2(\mathbb{B}_n)\to A_\beta^2(\disk)$, where $\beta=n-2$, is Hilbert-Schmidt if and only if
\begin{equation*}\label{eq23}
\int_{\mathbb{D}}
\frac{|\psi(z)|^{2}}{\big(1-\left|\varphi(z)\right|^2\big)^{n+2}}\,\mrd A_\beta(z)<\infty.
\end{equation*}
\end{theorem}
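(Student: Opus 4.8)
The plan is to mirror the computation carried out in Theorem~\ref{rf-bergman-ball}, but with the Bergman orthonormal basis on $\mathbb{B}_n$ replaced by the Hardy orthonormal basis \eqref{on-hardy-ball}. First I would take $f=e_J$ with $e_J(z)=\sqrt{(n-1+|J|)!/((n-1)!\,J!)}\,z^J$ and apply $E_{\psi,\varphi}$. Since $e_J$ is homogeneous of degree $|J|$, identity \eqref{rfphi} gives $(Re_J)\circ\varphi=|J|\,\varphi_1^{j_1}\cdots\varphi_n^{j_n}$, so that
$$\|E_{\psi,\varphi}(e_J)\|_\beta^2=\frac{(n-1+|J|)!}{(n-1)!\,J!}\,|J|^2\int_{\disk}|\psi(z)|^2\,|\varphi_1|^{2j_1}\cdots|\varphi_n|^{2j_n}\,\mrd A_\beta(z).$$
The extra factor $|J|^2$ relative to Theorem~\ref{hardy-bn-to-d} is precisely the contribution of the radial derivative.

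Next I would sum over all multi-indices $J\ge 0$, interchange the (nonnegative) sum with the integral by monotone convergence, and group the terms according to $|J|=k$. Applying the multinomial formula \eqref{multi} to collapse the inner sum over $\{|J|=k\}$ then yields
$$\sum_{J\ge 0}\|E_{\psi,\varphi}(e_J)\|^2_\beta=\int_{\disk}|\psi(z)|^2\sum_{k=0}^\infty\frac{(k+n-1)!}{(n-1)!\,k!}\,k^2\,|\varphi(z)|^{2k}\,\mrd A_\beta(z).$$

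The crucial step is to identify this power series up to a two-sided comparison. Since $(k+n-1)!/k!=(k+1)(k+2)\cdots(k+n-1)\asymp k^{n-1}$, the coefficient behaves like $k^{n-1}\cdot k^2=k^{n+1}$; on the other hand the negative-binomial expansion $(1-x)^{-(n+2)}=\sum_{k}\binom{k+n+1}{n+1}x^k$ has coefficients $(k+n+1)!/((n+1)!\,k!)\asymp k^{n+1}$. Hence the two series are comparable term by term, and replacing one by the other gives
$$\sum_{J\ge 0}\|E_{\psi,\varphi}(e_J)\|^2_\beta\asymp\int_{\disk}\frac{|\psi(z)|^2}{\big(1-|\varphi(z)|^2\big)^{n+2}}\,\mrd A_\beta(z),$$
so finiteness of the left-hand side is equivalent to finiteness of the stated integral, which is exactly the Hilbert-Schmidt criterion. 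The main point requiring care is this asymptotic matching of the polynomial-weighted series with the series for $(1-|\varphi|^2)^{-(n+2)}$; once the exponent $n+2$ is pinned down by the $k^{n+1}$ growth (the $2$ coming from $k^2$ and the remaining $n$ from the Hardy-space normalization), the comparison is uniform because $|\varphi(z)|<1$ for every $z\in\disk$, and what remains is only the routine Tonelli interchange already invoked above.
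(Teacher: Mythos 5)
Your argument is essentially the paper's proof: the same Hardy orthonormal basis \eqref{on-hardy-ball}, the same computation of $\|E_{\psi,\varphi}(e_J)\|_\beta^2$ with the extra factor $|J|^2$, the same multinomial collapse of the sum over $\{|J|=k\}$, and the same coefficient comparison $k^2\,(k+n-1)!/k!\asymp (k+n+1)!/k!$ pinning down the exponent $n+2$. You merely make explicit the Tonelli interchange and the $k^{n+1}$ growth rate that the paper leaves implicit (both arguments share the harmless imprecision that the comparison fails at the single term $k=0$).
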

\begin{proof}
For the orthonormal functions \eqref{on-hardy-ball}
in $H^2(\mathbb{B}_n)$ we have
\begin{align*}
\|E_{\psi,\varphi}(e_J)\|_\beta^2 &=\frac{|J|^2\, (n-1+|J|)!}{(n-1)!\,J!}\int_{\disk}|\psi(z)|^2\left(|\varphi_1|^{2}\cdots |\varphi_n|^{2}\right)^J\mrd A_\beta(z).
\end{align*}
Therefore,
\begin{align*}
\sum_{J\ge 0}\|E_{\psi,\varphi}(e_J)\|^2_\beta &=\int_{\disk}|\psi(z)|^2\sum_{k=0}^\infty\frac{k^2\,(k+n-1)!}{(n-1)!\, k!}|\varphi(z)|^{2k}\mrd A_\beta(z)\\
&\asymp\int_{\disk}|\psi(z)|^2\sum_{k=0}^\infty\frac{(k+n+1)!}{(n+1)!\, k!}|\varphi(z)|^{2k}\mrd A_\beta(z)\\
&=\int_{\disk}\frac{|\psi(z)|^2}{(1-|\varphi(z)|^2)^{n+2}}\mrd A_\beta(z).
\end{align*}
\end{proof}

We now take up the composition-differentiation operators \eqref{higher-order} defined by higher order radial derivatives.
\begin{theorem}\label{rtf-bergman-ball}
Let $\alpha>-1$ and let $\varphi:\disk\to \mathbb{B}_n$ be a holomorphic function.
Then the operator $E^t_{\psi,\varphi}:A_\alpha^2(\mathbb{B}_n)\to A_\beta^2(\disk)$, where $\beta=n-1+\alpha$, is Hilbert-Schmidt if and only if
\begin{equation*}\label{eq21}
\int_{\mathbb{D}}
\frac{|\psi(z)|^{2}}{\big(1-\left|\varphi(z)\right|^2\big)^{n+\alpha+2t+1}}\,\mrd A_\beta(z)<\infty.
\end{equation*}
\end{theorem}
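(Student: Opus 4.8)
The plan is to follow the proof of Theorem~\ref{rf-bergman-ball} verbatim, replacing the single radial derivative $R$ by its iterate $R^t$; the sole effect is that the eigenvalue-type factor $|J|$ becomes $|J|^t$, so that the $|J|^2$ appearing in the norm computation is upgraded to $|J|^{2t}$. First I would take the orthonormal basis $\{e_J\}$ from \eqref{on-bergman-ball} and apply the identity \eqref{rtfphi}, namely $R^t e_J(\varphi(z))=|J|^t\,\varphi_1^{j_1}\cdots\varphi_n^{j_n}$, to obtain
\begin{equation*}
\|E^t_{\psi,\varphi}(e_J)\|_\beta^2=\frac{|J|^{2t}\,\Gamma(n+|J|+\alpha+1)}{\Gamma(n+\alpha+1)\,J!}\int_{\disk}|\psi(z)|^2\left(|\varphi_1|^{2j_1}\cdots |\varphi_n|^{2j_n}\right)\mrd A_\beta(z).
\end{equation*}

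Next I would sum over all multi-indices $J$, group the summands according to $|J|=k$, and invoke the multinomial formula \eqref{multi} to collapse the inner sum to $|\varphi(z)|^{2k}/k!$. This yields
\begin{equation*}
\sum_{J\ge 0}\|E^t_{\psi,\varphi}(e_J)\|^2_\beta=\int_{\disk}|\psi(z)|^2\sum_{k=0}^\infty \frac{k^{2t}\,\Gamma(k+n+\alpha+1)}{\Gamma(n+\alpha+1)\, k!}|\varphi(z)|^{2k}\mrd A_\beta(z).
\end{equation*}
The heart of the argument is then the asymptotic analysis of the coefficient $k^{2t}\,\Gamma(k+n+\alpha+1)$. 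Using the relation $\Gamma(k+x)\asymp k^x\,\Gamma(x)$ as $k\to\infty$ (equivalently $\Gamma(k+n+\alpha+2t+1)/\Gamma(k+n+\alpha+1)\asymp k^{2t}$), I would replace that coefficient by $\Gamma(k+n+\alpha+2t+1)$ up to a multiplicative constant, recasting the series as comparable to
\begin{equation*}
\sum_{k=0}^\infty \frac{\Gamma(k+n+\alpha+2t+1)}{\Gamma(n+\alpha+2t+1)\, k!}|\varphi(z)|^{2k}=\frac{1}{\big(1-|\varphi(z)|^2\big)^{n+\alpha+2t+1}},
\end{equation*}
where the final equality is the binomial expansion of $(1-x)^{-c}$ with $c=n+\alpha+2t+1$ and $x=|\varphi(z)|^2<1$. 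Substituting back, the finiteness of $\sum_J\|E^t_{\psi,\varphi}(e_J)\|^2_\beta$ becomes equivalent to the convergence of the stated integral, which is precisely the Hilbert-Schmidt condition.

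The step I expect to require the most care is the passage from the pointwise comparison of summands to a genuine two-sided estimate for the full sums, uniform in $z$. Since the implied constants in $\Gamma(k+x)\asymp k^x\,\Gamma(x)$ depend only on the fixed parameters $n,\alpha,t$ and not on $z$, and since the $k=0$ term contributes nothing when $t>0$ (the factor $k^{2t}$ vanishes there), these constants can be factored out of the integral once and for all; no difficulty arises as $|\varphi(z)|\to 1$ because the comparison is between two power series of identical geometric growth. Finally, the degenerate value $t=0$ returns the pure composition operator with exponent $n+\alpha+1$, recovering Theorem~\ref{thm3} and confirming the internal consistency of the formula.
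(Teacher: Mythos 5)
Your proposal reproduces the paper's own proof essentially verbatim: the same orthonormal basis \eqref{on-bergman-ball}, the identity \eqref{rtfphi} giving the factor $|J|^{2t}$, the multinomial collapse \eqref{multi}, the Stirling-type comparison $\Gamma(k+n+\alpha+2t+1)/\Gamma(k+n+\alpha+1)\asymp k^{2t}$, and the binomial series summation. Your added remarks on the uniformity of the implied constants and on the vanishing $k=0$ term are in fact slightly more careful than the paper's write-up, so the argument stands as is.
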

\begin{proof}
We start by the orthonormal vectors \eqref{on-bergman-ball}.
It follows from \eqref{rtfphi} that
\begin{align*}
\|E_{\psi,\varphi}(e_J)\|_\beta^2=
\frac{|J|^{2t}\,\Gamma(n+|J|+\alpha+1)}{\Gamma(n+\alpha+1)\,J!}\int_{\disk}|\psi(z)|^2\left(|\varphi_1|^{2j_1}\cdots |\varphi_n|^{2j_n}\right)\mrd A_\beta(z).
\end{align*}
Therefore from \eqref{multi} we obtain
\begin{align*}
\sum_{J\ge 0}\|E_{\psi,\varphi}(e_J)\|^2_\beta &=
\int_{\disk}|\psi(z)|^2 \sum_{k=0}^\infty \frac{k^{2t}\,\Gamma(k+n+\alpha+1)}{\Gamma(n+\alpha+1)\, k!}|\varphi|^{2k}\mrd A_\beta(z)\\
&\asymp\int_{\disk}|\psi(z)|^2 \sum_{k=0}^\infty \frac{\Gamma(k+n+\alpha+2t+1)}{\Gamma(n+\alpha+2t+1)\, k!}|\varphi|^{2k}\mrd A_\beta(z)\\
&=\int_{\disk}\frac{|\psi(z)|^2}{\big(1-|\varphi(z)|^2\big)^{n+\alpha+2t+1}}\mrd A_\beta(z),
\end{align*}
from which the result follows.
\end{proof}

\begin{theorem}\label{rtf-hardy-ball}
Let $n>1$ and let $\varphi:\disk\to \mathbb{B}_n$ be a holomorphic function.
Then the operator $E^t_{\psi,\varphi}:H^2(\mathbb{B}_n)\to A_\beta^2(\disk)$, where $\beta=n-2$, is Hilbert-Schmidt if and only if
\begin{equation*}\label{eq23}
\int_{\mathbb{D}}
\frac{|\psi(z)|^{2}}{\big(1-\left|\varphi(z)\right|^2\big)^{n+2t}}\,\mrd A_\beta(z)<\infty.
\end{equation*}
\end{theorem}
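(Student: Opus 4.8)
The plan is to follow the same template as the proof of Theorem \ref{rtf-bergman-ball}, now with the orthonormal basis \eqref{on-hardy-ball} of $H^2(\mathbb{B}_n)$ in place of the Bergman basis. First I would apply $E^t_{\psi,\varphi}$ to each basis vector $e_J$ and invoke the identity \eqref{rtfphi}, namely $R^t e_J(\varphi(z))=|J|^t\,\varphi_1^{j_1}\cdots\varphi_n^{j_n}$, to get
$$\|E^t_{\psi,\varphi}(e_J)\|_\beta^2=\frac{|J|^{2t}\,(n-1+|J|)!}{(n-1)!\,J!}\int_{\disk}|\psi(z)|^2\left(|\varphi_1|^{2}\cdots |\varphi_n|^{2}\right)^J\mrd A_\beta(z).$$
Next I would sum over all multi-indices $J\ge 0$, group the terms according to $|J|=k$, and collapse the inner sum by the multi-nomial formula \eqref{multi}. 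The interchange of summation and integration is justified by Tonelli's theorem, as every term is nonnegative, and this turns the double series into a single power series in $|\varphi(z)|^2$:
$$\sum_{J\ge 0}\|E^t_{\psi,\varphi}(e_J)\|^2_\beta=\int_{\disk}|\psi(z)|^2\sum_{k=0}^\infty\frac{k^{2t}\,(k+n-1)!}{(n-1)!\, k!}|\varphi(z)|^{2k}\mrd A_\beta(z).$$

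The heart of the argument is to identify the closed form of this series. Using the asymptotic relation $\Gamma(k+x)\asymp k^x\,\Gamma(x)$ as $k\to\infty$ already employed in Theorems \ref{one-dimension} and \ref{rtf-bergman-ball}, I would show that the coefficient sequence satisfies
$$\frac{k^{2t}\,(k+n-1)!}{(n-1)!\, k!}\asymp\frac{\Gamma(k+n+2t)}{\Gamma(n+2t)\, k!}\qquad (k\to\infty),$$
since both sides are comparable to a constant multiple of $k^{n+2t-1}$. Summing the right-hand generating function through $\sum_{k\ge 0}\frac{\Gamma(k+s)}{\Gamma(s)\,k!}x^k=(1-x)^{-s}$ with $s=n+2t$ and $x=|\varphi(z)|^2$ produces $\big(1-|\varphi(z)|^2\big)^{-(n+2t)}$, so that
$$\sum_{J\ge 0}\|E^t_{\psi,\varphi}(e_J)\|^2_\beta\asymp\int_{\disk}\frac{|\psi(z)|^2}{\big(1-|\varphi(z)|^2\big)^{n+2t}}\mrd A_\beta(z),$$
from which the stated equivalence follows. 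As a consistency check, this is the formal specialization $\alpha=-1$ of Theorem \ref{rtf-bergman-ball}, whose exponent $n+\alpha+2t+1$ then becomes exactly $n+2t$.

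The only point requiring care, and the step I expect to be the main (mild) obstacle, is to ensure that the termwise asymptotic comparison of coefficients transfers to a genuine comparison of the two power series, and hence of the integrals. Because the ratio of the two coefficient sequences is bounded above and below by positive constants for all $k$ beyond some $k_0$, the corresponding tails of the series are comparable pointwise on $\disk$, while the finitely many initial terms contribute only a polynomial in $|\varphi(z)|^2$ that is bounded and therefore does not affect the finiteness of the integral. This is precisely the convergence- and divergence-preserving comparison used silently in the earlier theorems, so no genuinely new difficulty arises.
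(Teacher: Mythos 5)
Your proposal is correct and follows essentially the same route as the paper's own proof: expand $E^t_{\psi,\varphi}$ on the orthonormal basis \eqref{on-hardy-ball}, collapse the sum over $|J|=k$ via the multinomial identity \eqref{multi}, and replace the coefficients $k^{2t}(k+n-1)!/\big((n-1)!\,k!\big)$ by the comparable binomial coefficients of $(1-x)^{-(n+2t)}$. The only difference is that you make explicit the Tonelli interchange and the passage from termwise coefficient asymptotics to comparability of the integrals, which the paper leaves implicit.
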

\begin{proof}
Choosing $e_J(z)$ as in \eqref{on-hardy-ball},
we have
\begin{align*}
\|E^t_{\psi,\varphi}(e_J)\|_\beta^2 &=\frac{|J|^{2t}\,(n-1+|J|)!}{(n-1)!\,J!}\int_{\disk}|\psi(z)|^2\left(|\varphi_1|^{2}\cdots |\varphi_n|^{2}\right)^J\mrd A_\beta(z).
\end{align*}
Therefore,
\begin{align*}
\sum_{J\ge 0}\|E^t_{\psi,\varphi}(e_J)\|^2_\beta &=\int_{\disk}|\psi(z)|^2\sum_{k=0}^\infty\frac{k^{2t}\,(k+n-1)!}{(n-1)!\, k!}|\varphi(z)|^{2k}\mrd A_\beta(z)\\
&\asymp\int_{\disk}|\psi(z)|^2\sum_{k=0}^\infty\frac{(k+n+2t-1)!}{(n+2t-1)!\, k!}|\varphi(z)|^{2k}\mrd A_\beta(z)\\
&=\int_{\disk}\frac{|\psi(z)|^2}{(1-|\varphi(z)|^2)^{n+2t}}\mrd A_\beta(z).
\end{align*}
\end{proof}

\section{Declarations}

\noindent{\bf Availability of data and materials}\\
Data sharing is not applicable to this article as no data sets were generated or analyzed.\\

\noindent{\bf Competing interests}\\
The author declares no competing interests.


\begin{thebibliography}{99}
\bibitem{allen}
Allen, Robert F., Heller, Kathrine C., Pons, Matthew A., \emph{Composition-differentiation operators on the Dirichlet space},
J. Math. Anal. Appl. \textbf{512}(2022), Article ID 126186

\bibitem{abkar1}
Babaei, A., Abkar, A., \emph{Weighted Composition-differentiation operators on the Hardy and Bergman spaces},
Bull. Iran. Math. Soc. \textbf{48}(2022), 3637--3658

\bibitem{coifman}
Coifman, R., Rochberg, R., \textit{Representation theorems for holomorphic and harmonic functions in $L^p$}, Asterisque \textbf{77}, 1980, 1--66

\bibitem{cowen}
C.C. Cowen, B.D. MacCluer,  \textit{Composition operators on spaces of analytic functions}, Studies in Advanced Mathematics, CRC Press, 1995

\bibitem{duren}
Duren, P., Schuster, A. P., \emph{Bergman spaces}, Mathematical
Surveys and Monographs, Vol. \textbf{100}, American Mathematical
Society, 2004

\bibitem{fat1}
Fatehi, M., Hammond, C.N.B. \textit{Composition-differentiation operator on the Hardy space}, Proc. Amer. Math. Soc.
\textbf{148}(2020), no. 7, 2893--2900

\bibitem{fat2}
Fatehi, M., Hammond, C.N.B.\textit{Norms of weighted composition operators with automorphic symbol}, Integr. Equ. Oper. Theory. 
\textbf{92} (2020), no. 13, 12 pages



\bibitem{hkz}
Hedenmalm, H., Korenblum, B., Zhu, K., \emph {Theory of Bergman
spaces}, Graduate Texts in Mathematics, \textbf{199},
Springer, 2000


\bibitem{hib}
Hibschweiler, R. A., Portnoy, N.,\textit{Composition followed by differentiation between Bergman and Hardy spaces}, Rocky Mountain J. Math.\textbf{35} (2005), 843--855


\bibitem{ohno2}
Izuchi, K.J., Nguyen, Q.D., Ohno, S., \textit{Composition operators induced by analytic maps to the polydisk}, Canadian J. Math. {\bf 64} (2012), 1329--1340


\bibitem{lia}
Liang, Y. X., Zhou, Z. H., \emph{The products of differentiation and composition operators from
logarithmic Bloch spaces to $\mu$-Bloch spaces}, Bull. Iran. Math. Soc.\textbf{46} (2020), 159--176


\bibitem{ohno1}
Ohno, S., \textit{Products of composition and differentiation between Hardy spaces}, Bull. Austr. Math. Soc.
\textbf{73} (2006), no. 2, 235--243


\bibitem{rud}
Rudin, W., \textit{Function Theory in the Unit Ball of $\mathbb{C}^n$}, Springer-Verlag, (2008).


\bibitem{kzhu}
Zhu, K., \textit{M\"{o}bius invariant Hilbert spaces of holomorphic functions in the unit ball of $\mathbb{C}^n$}, Trans. Am. Math. Soc. \textbf{323} (1991), 823--842

\bibitem{szhu1}
Stessin, M., Zhu, K., \emph{Composition operators induced by symbols defend on a polydisk}. J. Math. Anal. Appl. Vol. {\bf 319} (2006), 815--829 .

\bibitem{szhu2}
Stessin, M., Zhu, K., \emph{Composition operators on embedded disks}, J. Operator Theory, {\bf 56} (2006), 423-449.

\bibitem{kehezhu}
Zhu, K., \emph{Operator theory in function spaces}, Monographs and Textbooks in Pure and Applied Mathematics \textbf{139}, Marcel Dekker Inc., New York, 1990.

\bibitem{zhu2}
Zhu, K., \emph{Spaces of holomorphic functions in the unit ball}, Graduate Texts in Mathematics,
\textbf{226}, Springer, 2005

\end{thebibliography}
\end{document}